\begin{document}

\title{ Cubic Derivations on Banach
Algebras}

\titlerunning{ Cubic Derivations on Banach
Algebras}        % if too long for running head

\author{Abasalt Bodaghi   
}

\authorrunning{Abasalt Bodaghi} % if too long for running head

\institute{Abasalt Bodaghi \at
              Department of Mathematics, Garmsar Branch, Islamic Azad
University, Garmsar, Iran \\
              Tel.: +98-232-4225010\\
                            \email{abasalt.bodaghi@gmail.com} }          %  \\
%             \emph{Present address:} of F. Author  %  if needed

\date{Accepted in Acta Mathematica Vietnamica}
% The correct dates will be entered by the editor

\maketitle

\begin{abstract}
Let $A$ be a Banach algebra and $X$ be a Banach $A$-bimodule. A
mapping $D :A\longrightarrow X$ is a cubic derivation if $D$ is a
cubic homogeneous mapping, that is $D$ is cubic and $D(\lambda
a)={\lambda}^3 D(a)$ for any complex number $\lambda$ and all $a\in A$, and
$D(ab)=D(a)\cdot b^3 +a^3\cdot D(b)$ for all $a,b\in A$. In this
paper, we prove the stability of a cubic derivation with direct
method. We also employ a fixed point method to establish of the
stability and the superstability for cubic derivations.
\keywords{Banach algebra \and Cubic derivation \and Stability \and Superstability}
% \PACS{PACS code1 \and PACS code2 \and more}
 \subclass{39B52 \and 47B47 \and 39B72 \and 46H25}
\end{abstract}

\section{Introduction}
\label{intro}
In 1940, Ulam \cite{ul} posed the following question concerning
the stability of group homomorphisms: Under what condition does
there is an additive mapping near an approximately additive
mapping between a group and a metric group? One year later, Hyers
 \cite{h} answered the problem of Ulam under the assumption that the
groups are Banach spaces. This problem for linear mapping on Banach spaces was solved by J. M. Rassias in \cite{ra1}. A generalized version of the theorem of
Hyers for approximately linear mappings was given by Th. M.
Rassias \cite{ra}. Subsequently, the stability problems of various
functional equation have been extensively investigated by a
number of authors (for example, \cite{bag}, \cite{jp} and \cite{ph}). In
particular, one of the functional equations which has been
studied frequently is the cubic functional equation:
\begin{eqnarray}\label{a0} f(2x+y)+f(2x-y)=2f(x+y)+2f(x-y)+12f(x)
\end{eqnarray} 

The cubic function $f(x)=ax^3$ is a solution of this functional equation. The stability of
the functional equation (\ref{a0}) has been considered on
different spaces by a number of writers (for instance, \cite{n} and
\cite{ravi}).

In 2003, C$\breve{a}$dariu and Radu applied a fixed point method
to the investigation of the Jensen functional equation. They
presented a short and a simple proof for the Cauchy functional
equation  and the quadratic functional equation in \cite{cr2} and
\cite{cara}, respectively. After that, this method has been
applied by many authors to establish of miscellaneous functional
equations (see \cite{bo1}, \cite{ebp} and \cite{p7}).

In \cite{es}, Eshaghi Gordji et al. introduced the concept of a
cubic derivation which is a different notion of the current
paper. In fact, they did not consider the homogeneous property of
such derivations. In that paper,  the authors studied the
stability of cubic derivations on commutative Banach algebras.
The stability and the superstability of cubic double centralizers
and cubic multipliers on Banach algebras has been earlier proved
in \cite{lee}.

In this paper, we prove the stability of cubic derivations on
Banach algebras. An example of such derivations is indicated as
well. Using a fixed point
theorem, we also show that a cubic derivation can be superstable.

\section{Stability of cubic derivations}
\label{sec:1}
Let $A$ be a Banach algebra. A Banach space $X$ which is also a
left $A$-module is said to be a {\it left Banach $A$-module}
 if there is $k> 0$ such that
$$\|a\cdot x\| \leq k \|a\| \|x\|.$$

Similarly, a right Banach $A$-module and a Banach $A$-bimodule
are defined. 
Throughout this paper, we assume that $A$ is a Banach algebra,
$X$ is a Banach $A$-bimodule and denote $\overbrace{A\times
A\times...\times A}^{n-times}$ by $A^n$. For a natural number $n_0$, we
define $\bf T_{\frac{1}{n_0}} :=\{e^{i\theta} \ ;\
0\leq\theta\leq\frac{2\pi}{n_0} \}$ and denote $\bf
T_{\frac{1}{n_0}}$ by $\bf T$ when $n_0=1$. We also denote the set of all positive integers numbers, real numbers and complex numbers by $\bf N$, $\bf R$ and $\bf C$, respectively. First let us show by a example that the cubic derivations exist
on Banach algebras. Indeed, the following example is taken from
\cite{es} with the non-trivial module actions while in the
mentioned paper the left module action is zero.

\paragraph{\bf Example} Let $A$ be a Banach algebra.
Consider
\[{\mathcal T}:= \left[ \begin{array}{cccc}
{0} & {A} & {A} & {A} \\
{0} & {0} & {A} & {A}\\
{0} & {0} & {0} & {A}\\
{0} & {0} & {0} & {0} \\
 \end{array} \right]. \]
 
Then  $\mathcal T$ is a Banach algebra with the sum and product
being given by the usual $4\times 4$ matrix operations and with
the following norm:
\[ \|\left[ \begin{array}{cccc}
{0} & {a_1} & {a_2} & {a_3}\\
{0} & {0} & {a_4} & {a_5} \\
{0} & {0} & {0} & {a_6}\\
{0} & {0} & {0} & {0}\\
 \end{array} \right]\|=\sum_{j=1}^6\|a_j\| \hspace {0.7 cm} (a_j\in A). \]
 
So
\[{\mathcal T^*} = \left[ \begin{array}{cccc}
{0} & {A}^* & {A}^* & {A}^* \\
{0} & {0} & {A}^* & {A}^*\\
{0} & {0} & {0} & {A}^*\\
{0} & {0} & {0} & {0} \\
 \end{array} \right], \]
is the dual of $\mathcal T$ equipped with the following norm:
\[ \|\left[ \begin{array}{cccc}
{0} & {f_1} & {f_2} & {f_3}\\
{0} & {0} & {f_4} & {f_5} \\
{0} & {0} & {0} & {f_6}\\
{0} & {0} & {0} & {0}\\
 \end{array} \right]\|=Max\{\| f_j \| : 0\leq j\leq 6\} \hspace {0.7 cm} (f_j\in A^*). \]
 
Suppose that $\mathcal A= \left[ \begin{array}{cccc}
{0} & {a_1} & {a_2} & {a_3}\\
{0} & {0} & {a_4} & {a_5} \\
{0} & {0} & {0} & {a_6}\\
{0} & {0} & {0} & {0}\\
 \end{array} \right], \mathcal X= \left[ \begin{array}{cccc}
{0} & {x_1} & {x_2} & {x_3}\\
{0} & {0} & {x_4} & {x_5} \\
{0} & {0} & {0} & {x_6}\\
{0} & {0} & {0} & {0}\\
 \end{array} \right]\in \mathcal T$ and $\mathcal F=\left[ \begin{array}{cccc}
{0} & {f_1} & {f_2} & {f_3}\\
{0} & {0} & {f_4} & {f_5} \\
{0} & {0} & {0} & {f_6}\\
{0} & {0} & {0} & {0}\\
 \end{array} \right]\in \mathcal T^*$ in which $f_j\in A^*$ and $a_j,x_j\in A\,\, (0\leq j\leq 6)$. Consider the module actions of $\mathcal T$ on $\mathcal T^*$ as
 follows:
\[ \langle \mathcal F\cdot\mathcal A, \mathcal X
\rangle=\sum_{j=1}^6f(a_jx_j),\quad \langle \mathcal
A\cdot\mathcal F, \mathcal X \rangle=\sum_{j=1}^6f(x_ja_j).\]

Then $\mathcal T^*$  is a Banach $\mathcal T$-bimodule. Let  $\mathcal
G_0= \left[\begin{array}{cccc}
{0} & {g_1} & {g_2} & {g_3}\\
{0} & {0} & {g_4} & {g_5} \\
{0} & {0} & {0} & {g_6}\\
{0} & {0} & {0} & {0}\\
 \end{array} \right]\in \mathcal T^*$. Define $D:\mathcal
 T\longrightarrow \mathcal
 T^*$ via \[D(\mathcal A)=\mathcal
G_0\cdot\mathcal A^3-\mathcal A^3\cdot\mathcal G_0 \hspace {0.5
cm}(\mathcal A \in \mathcal T).
 \]\\
 
Given $\mathcal A= \left[ \begin{array}{cccc}
{0} & {a_1} & {a_2} & {a_3}\\
{0} & {0} & {a_4} & {a_5} \\
{0} & {0} & {0} & {a_6}\\
{0} & {0} & {0} & {0}\\
 \end{array} \right], \mathcal B= \left[ \begin{array}{cccc}
{0} & {b_1} & {b_2} & {b_3}\\
{0} & {0} & {b_4} & {b_5} \\
{0} & {0} & {0} & {b_6}\\
{0} & {0} & {0} & {0}\\
 \end{array} \right]\in \mathcal T$, we have

\begin{eqnarray}\label{r1}
\nonumber \langle D(2\mathcal A+\mathcal B), \mathcal
X\rangle&=&\langle \mathcal G_0\cdot(2\mathcal A+\mathcal
B)^3-(2\mathcal A+\mathcal B)^3\cdot\mathcal G_0,
\mathcal X\rangle \\
\nonumber &=&g_3((2a_1+b_1)(2a_4+b_4)(2a_6+b_6)x_3)\\
&-&g_3(x_3(2a_1+b_1)(2a_4+b_4)(2a_6+b_6)).
\end{eqnarray}

Similarly,
\begin{eqnarray}\label{r2}\nonumber\langle D(2\mathcal
A-\mathcal B), \mathcal
X\rangle &=&g_3((2a_1-b_1)(2a_4-b_4)(2a_6-b_6)x_3)\\
&-&g_3(x_3(2a_1-b_1)(2a_4-b_4)(2a_6-b_6)).
\end{eqnarray} 

On the other hand,
\begin{eqnarray}\label{r3}
\nonumber \langle 2D(\mathcal A+\mathcal B), \mathcal
X\rangle&=&\langle 2\mathcal G_0\cdot(\mathcal A+\mathcal
B)^3-2(\mathcal A+\mathcal B)^3\cdot\mathcal G_0,
\mathcal X\rangle \\
\nonumber &=&2g_3((a_1+b_1)(a_4+b_4)(a_6+b_6)x_3)\\
&-&2g_3(x_3(a_1+b_1)(a_4+b_4)(a_6+b_6)),
\end{eqnarray}
and
\begin{eqnarray}\label{r4}\nonumber\langle 2D(\mathcal
A-\mathcal B), \mathcal
X\rangle&=&\langle 2\mathcal G_0\cdot(\mathcal A-\mathcal
B)^3-2(\mathcal A-\mathcal B)^3\cdot\mathcal G_0,
\mathcal X\rangle \\
\nonumber &=&2g_3((a_1-b_1)(a_4-b_4)(a_6-b_6)x_3)\\
&-&2g_3(x_3(a_1-b_1)(a_4-b_4)(a_6-b_6)).\end{eqnarray} 

Also,
\begin{eqnarray}\label{r5}
   \nonumber \langle 12D(\mathcal A), \mathcal X\rangle &=& \langle
12\mathcal G_0\cdot\mathcal A^3-12\mathcal A^3\cdot\mathcal G_0,
\mathcal X\rangle  \\
      &=& 12g_3(a_1a_4a_6x_3)-12g_3(x_3a_1a_4a_6).
\end{eqnarray}

If follows from (\ref{r1})-(\ref{r5}) that
$$D(2\mathcal A+\mathcal B)+D(2\mathcal A-\mathcal B)=2D(\mathcal A+\mathcal B)+2D(\mathcal A-\mathcal B)+12D(\mathcal
A)$$ for all $\mathcal A, \mathcal B\in \mathcal T$. This shows
that $D$ is a cubic mapping. It is easy to see that $D(\lambda
\mathcal A)=\lambda^3 D(\mathcal A)$ for all $\mathcal A\in
\mathcal T$ and $\lambda\in \bf C$. Thus $D$ is a cubic
homogeneous mapping. Since $\mathcal T^4=\{0\}$, we have
$D(\mathcal A\mathcal B)=D(\mathcal A)\cdot\mathcal
B^3+\mathcal A^3\cdot D(\mathcal B)=0$ for all $\mathcal A,
\mathcal B \in \mathcal T$. Hence, $D$ is a cubic derivation.

 Now, we are going to prove the stability of cubic derivations on
Banach algebras.

\begin{theorem}\label{th1}
Suppose that $f: A \longrightarrow X$ is a mapping with $f(0) = 0$
for which there exists a function $\varphi  :A^4\longrightarrow [0,\infty)$
such that

\begin{eqnarray}\label{0}\widetilde{\varphi}(a,b,c,d):=\sum_{k=0}^{\infty}
\frac{1}{8^{k}}\varphi(2^ka,2^kb,2^kc,2^kd)<\infty\end{eqnarray}

\begin{eqnarray*}\label{1}
\|f(2\lambda a+\lambda b)+f(2\lambda a-\lambda b)-2\lambda^3
f(a+b)-2\lambda^3
f(a-b)-12\lambda^3f(a)\|\end{eqnarray*}\begin{eqnarray}\label{a1}\leq\varphi(a,b,0,0)
\end{eqnarray}
\begin{eqnarray}\label{2}
\|f(cd)-f(c)\cdot d^3-c^3\cdot f(d)\|\leq\varphi(0,0,c,d)
\end{eqnarray}
for all $\lambda\in \bf T_{\frac{1}{n_0}}$ and  all $a,b,c,d \in
A.$ Also, if for each fixed $a\in A$ the mappings $t\mapsto
f(ta)$ from $\bf R$ to $X$ is continuous, then there exists
a unique cubic derivation $D:A\longrightarrow X$ satisfying
\begin{eqnarray}\label{3}
\|f(a)-D(a)\|\leq \frac{1}{16}\widetilde{\varphi}(a,0,0,0)
\end{eqnarray}
for all  $a\in A$.
\end{theorem}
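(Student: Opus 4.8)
The plan is to run the classical direct (Hyers--Ulam) method: construct $D$ as the limit of $8^{-n}f(2^na)$ and then verify in turn that it is cubic, homogeneous, and a derivation. First I would extract the basic contraction estimate by specializing (\ref{a1}) at $\lambda=1$ and $b=0$, which gives $\|2f(2a)-16f(a)\|\le\varphi(a,0,0,0)$, hence $\|8^{-1}f(2a)-f(a)\|\le\frac1{16}\varphi(a,0,0,0)$. Replacing $a$ by $2^ka$ and dividing by $8^k$ yields $\|8^{-(k+1)}f(2^{k+1}a)-8^{-k}f(2^ka)\|\le\frac1{16}\,8^{-k}\varphi(2^ka,0,0,0)$, so by (\ref{0}) the sequence $8^{-n}f(2^na)$ is Cauchy; since $X$ is complete, $D(a):=\lim_n 8^{-n}f(2^na)$ exists. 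Summing the telescoping bound over $k\ge 0$ gives exactly the estimate (\ref{3}).

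Next I would verify the two algebraic identities. For the cubic equation, replace $(a,b)$ by $(2^na,2^nb)$ in (\ref{a1}) with $\lambda=1$, divide by $8^n$, and let $n\to\infty$: the right-hand side tends to $0$ because finiteness of $\widetilde\varphi(a,b,0,0)$ forces $8^{-n}\varphi(2^na,2^nb,0,0)\to 0$, while the left-hand side converges to the cubic identity for $D$. For the derivation identity, replace $(c,d)$ by $(2^nc,2^nd)$ in (\ref{2}); since $(2^nc)^3=8^nc^3$ and $(2^nc)(2^nd)=4^ncd$, dividing by $64^n$ gives $\|64^{-n}f(4^ncd)-8^{-n}f(2^nc)\cdot d^3-8^{-n}c^3\cdot f(2^nd)\|\le 64^{-n}\varphi(0,0,2^nc,2^nd)$. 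Because the module actions are bounded, right multiplication by $d^3$ and left multiplication by $c^3$ are continuous, so letting $n\to\infty$ (noting $64^{-n}f(4^ncd)=8^{-2n}f(2^{2n}cd)\to D(cd)$, and that the error is controlled by the convergent $\widetilde\varphi(0,0,c,d)$) yields $D(cd)=D(c)\cdot d^3+c^3\cdot D(d)$.

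The main obstacle is homogeneity $D(\lambda a)=\lambda^3 D(a)$ for all $\lambda\in\mathbf C$. I would first put $b=0$ in (\ref{a1}) to get $\|f(2\lambda a)-8\lambda^3 f(a)\|\le\frac12\varphi(a,0,0,0)$ for $\lambda\in\mathbf T_{\frac1{n_0}}$; rescaling $a\mapsto 2^na$, dividing by $8^n$, and taking limits gives $D(\lambda a)=\lambda^3 D(a)$ on the arc $\mathbf T_{\frac1{n_0}}$. I would then promote this to the full unit circle by a power trick: for $\theta\in[0,2\pi]$ one has $e^{i\theta/n_0}\in\mathbf T_{\frac1{n_0}}$, so applying arc-homogeneity $n_0$ times to $e^{i\theta}=(e^{i\theta/n_0})^{n_0}$ yields $D(e^{i\theta}a)=(e^{i\theta})^3 D(a)$. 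The real scaling is where the continuity hypothesis enters: the cubic identity already gives $D(ra)=r^3 D(a)$ for every rational $r$, and the map $t\mapsto D(ta)$, being the pointwise limit of the continuous maps $t\mapsto 8^{-n}f(2^nta)$, is measurable; invoking the standard regularity fact that a measurable (in particular continuous) solution of the cubic equation in one real variable must equal $t\mapsto t^3 D(a)$, I obtain $D(ta)=t^3 D(a)$ for all $t\in\mathbf R$. Writing $\lambda=|\lambda|e^{i\theta}$ and combining the real and unit-circle cases then gives full $\mathbf C$-homogeneity, so $D$ is a cubic derivation.

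Finally, uniqueness follows in the usual way. If $D'$ is another cubic derivation satisfying (\ref{3}), then using $D(2^na)=8^nD(a)$ and the triangle inequality, $\|D(a)-D'(a)\|=8^{-n}\|D(2^na)-D'(2^na)\|\le \frac18\,8^{-n}\widetilde\varphi(2^na,0,0,0)=\frac18\sum_{j\ge n}8^{-j}\varphi(2^ja,0,0,0)$, which is the tail of a convergent series and hence tends to $0$ as $n\to\infty$; therefore $D=D'$. The only genuinely delicate point in the whole argument is the passage from arc- and rational-homogeneity to full $\mathbf C$-homogeneity, i.e. correctly combining the power trick with the continuity-to-measurability step.
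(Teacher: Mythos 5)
Your proposal is correct and follows essentially the same route as the paper: the same contraction estimate from $b=0$, $\lambda=1$, the same limit $D(a)=\lim_n 8^{-n}f(2^na)$, the same passage from arc-homogeneity on $\mathbf T_{\frac{1}{n_0}}$ to the full circle via the power trick and then to $\mathbf R$ via continuity (the paper cites Czerwik for this step), the same $8^{-2n}\le 8^{-n}$ bound for the derivation identity, and the same uniqueness argument. The only (welcome) difference is that you make explicit the continuity of the module actions when passing to the limit in the derivation identity, a point the paper leaves implicit.
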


\begin{proof} Putting $b=0$ and $\lambda=1$ in (\ref{1}) , we have
\begin{eqnarray}\label{4}
\|\frac{1}{8}f(2a)-f(a)\|\leq\frac{1}{16}\varphi(a,0,0,0)
\end{eqnarray}
for all $a\in A$ (indeed $1\in \bf T_{\frac{1}{n}}$ for all
$n\in \bf N$). We replace $a$ by $2a$ in (\ref{4}) and
continue this method to get
\begin{eqnarray}\label{4.5}\left\|\frac{f(2^na)}{8^n}-f(a)\right\|\leq\frac{1}{16}\sum_{k=0}^{n-1}
\frac{\varphi(2^ka,0,0,0)}{8^k}\end{eqnarray} 

On the other hand, we can use induction to obtain
\begin{eqnarray}\label{5}\left\|\frac{f(2^na)}{8^n}-\frac{f(2^ma)}{8^m}\right\|\leq\frac{1}{16}\sum_{k=m}^{n-1}
\frac{\varphi(2^ka,0,0,0)}{8^k}\end{eqnarray} for all $a\in A$,
and $n>m\geq 0$. It follows from (\ref{0}) and (\ref{5}) that
sequence $\left\{\frac{f(2^na)}{8^n}\right\}$ is Cauchy. Since
$A$ is complete, this sequence convergence to the map $D$, that is
\begin{eqnarray}\label{6}
\lim_{n\to\infty}\frac{f(2^na)}{8^n}=D(a)
\end{eqnarray}

Taking the limit as $n$ tend to infinity in (\ref{4.5}) and
applying (\ref{6}), we can see that the inequality (\ref{3})
holds. Now, replacing $a, b$ by $2^na, 2^nb$, respectively in
(\ref{1}), we get
$$\|\frac{f(2^{n}(2\lambda a+b))}{8^{n}}-\frac{f(2^{n}(2\lambda
a-b))}{8^{n}}-2\lambda^{3}\frac{f(2^{n}(a+b))}{8^{n}}$$ $$
-2\lambda^{3}\frac{f(2^{n}(a-b))}{8^{n}}
-12\lambda^{3}\frac{f(2^{n}a)}{8^{n}}\|\leq\frac{\varphi(2^na,2^nb,0,0)}{8^n}.$$

Letting the limit as $n\longrightarrow \infty$, we obtain
\begin{eqnarray}\label{7} D(2\lambda a+\lambda b)+D(2\lambda a-\lambda b)=2\lambda^3 D(a+b)+2\lambda^3
D(a-b)+12\lambda^3f(a)\end{eqnarray}
 for all $a,b\in A$ and all
$\lambda\in \bf T_{\frac{1}{n_0}}$. If follows from (\ref{7})
that $D$ is a cubic mapping when $\lambda=1$. Letting $b=0$ in
(\ref{7}), we get $D(\lambda a)=\lambda^3 D(a)$ for all $a\in A$
and $\lambda\in \bf T_{\frac{1}{n_0}}$. Now, let
$\lambda=e^{i\theta}\in \bf{T}$. We set $\lambda_0=e^{\frac{i
\theta }{n_0}}$, thus $\lambda_0$ belongs to
$\bf{T}_\frac{1}{n_0}$ and $D(\lambda
a)=D(\lambda_0^{n_0}a)=\lambda_0^{3n_0}D(a)=\lambda^3 D(a)$ for
all $a\in A$. Under the assumption that $f(ta)$ is continuous in
$t\in \bf{R}$ for each fixed $a\in A$, by the same reasoning
as in the proof of \cite{Cz},  $D(\lambda a)=\lambda^3D(a)$ for
all $\lambda\in \bf{R}$ and $a\in A$. So,
$$D(\lambda a)=D(\frac{\lambda}{|\lambda|}~~|\lambda|
a)=\frac{\lambda^3}{|\lambda|^3}D(|\lambda|a)=\frac{\lambda^3}{|\lambda|^3}|\lambda|^3D(a)=\lambda^3D(a),$$
for all $a\in A$ and $\lambda\in \bf C\,\, (\lambda\neq 0)$.
Therefore, $D$ is cubic homogeneous mapping.
 If we replace $c,d$ by $2^nc, 2^nd$
respectively in (\ref{2}), we have
$$\|\frac{f(2^{2n}cd)}{8^{2n}}-\frac{f(2^{n}c)}{8^{n}} \cdot d^3-c^3\cdot\frac{f(2^{n}d)}{8^{n}}\|\leq\frac{\varphi(0,0,2^nc,2^nd)}{8^{2n}}\leq\frac{\varphi(0,0,2^nc,2^nd)}{8^n}.$$
for all $c,d\in A$. Taking the limit as $n\longrightarrow\infty$,
we get $D(cd)=D(c)\cdot d^3+c^3\cdot D(d)$, for all $c,d\in A$.
This shows that $D$ is a cubic derivation.

Now, let $D':A\longrightarrow X$ be another cubic derivation satisfying
(\ref{3}). Then we have

\begin{eqnarray*}
   \|D(a)-D'(a)\| &=& \frac{1}{8^{n}}\|D(2^{n}a)-D'(2^{n}a)\| \\
      &\leq& \frac{1}{8^{n}}(\|D(2^{n}a)-f(2^{n}a)\|+\|f(2^{n}a)-D'(2^{n}a)\|)\\
         &\leq&  \frac{1}{8^{n+1}}\widetilde{\varphi}(2^na,0,0,0)\\
      &=& \frac{1}{8}\sum_{k=0}^{\infty}
\frac{1}{8^{n+k}}\varphi(2^{n+k}a,0,0,0)\\
      &=& \frac{1}{8}\sum_{k=n}^{\infty}
\frac{1}{8^{k}}\varphi(2^{k}a,0,0,0)
\end{eqnarray*}
for all $a\in A$. By letting $n\longrightarrow\infty$ in the
preceding inequality, we immediately find the uniqueness of $D$.
This completes the proof.
\end{proof}
\begin{corollary}\label{coo}Let $\delta, r$ be positive real numbers with $r<3$, and
let $f: A \longrightarrow X$ be a mapping with $f(0) = 0$ such
that
\begin{eqnarray*}\label{9}
\|f(2\lambda a+\lambda b)+f(2\lambda a-\lambda b)-2\lambda^3
f(a+b)-2\lambda^3f(a-b)-12\lambda^3f(a)\|\end{eqnarray*}\begin{eqnarray}\label{a1}\leq\delta
(\|a\|^r + \|b\|^r)
\end{eqnarray}
\begin{eqnarray}\label{10}
\|f(cd)-f(c)\cdot d^3-c^3\cdot f(d)\|\leq\delta(\|c\|^r+\|d\|^r)
\end{eqnarray}
for all $\lambda\in \bf T_{\frac{1}{n_0}}$ and  all $a,b,c,d \in
A.$ Then there exists a unique cubic derivation $D: A
\longrightarrow X$ satisfying
\begin{eqnarray}\label{11}
\|f(a)-D(a)\|\leq \frac{\delta}{2(8-2^{r})}\|a\|^r
\end{eqnarray}
for all $a\in A$.
\end{corollary}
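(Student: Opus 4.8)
The plan is to deduce this entirely from Theorem \ref{th1} by a suitable choice of the control function. Define $\varphi : A^{4} \longrightarrow [0,\infty)$ by $\varphi(a,b,c,d) = \delta\bigl(\|a\|^{r} + \|b\|^{r} + \|c\|^{r} + \|d\|^{r}\bigr)$. With this choice one has $\varphi(a,b,0,0) = \delta(\|a\|^{r} + \|b\|^{r})$ and $\varphi(0,0,c,d) = \delta(\|c\|^{r} + \|d\|^{r})$, so that the first displayed estimate of the corollary and inequality (\ref{10}) are precisely the two defining inequalities of the theorem written out for this particular $\varphi$. Hence, once I check the summability condition (\ref{0}), the existence and uniqueness of a cubic derivation $D$ follow at once, and it only remains to evaluate the resulting bound.

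First I would verify (\ref{0}). For fixed $a,b,c,d \in A$, since $\|2^{k}a\|^{r} = 2^{kr}\|a\|^{r}$, each of the four terms contributes a geometric series with ratio $2^{r}/8$. Because $r<3$ gives $2^{r}<8$, this ratio is strictly less than $1$ and the series converges; explicitly,
\begin{equation*}
\widetilde{\varphi}(a,b,c,d) = \delta\,\bigl(\|a\|^{r} + \|b\|^{r} + \|c\|^{r} + \|d\|^{r}\bigr)\sum_{k=0}^{\infty}\Bigl(\frac{2^{r}}{8}\Bigr)^{k} = \frac{8\delta}{8-2^{r}}\bigl(\|a\|^{r} + \|b\|^{r} + \|c\|^{r} + \|d\|^{r}\bigr),
\end{equation*}
which is finite. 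In particular $\widetilde{\varphi}(a,0,0,0) = \frac{8\delta}{8-2^{r}}\|a\|^{r}$.

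Now Theorem \ref{th1} applies and yields a unique cubic derivation $D : A \longrightarrow X$ with $\|f(a)-D(a)\| \le \frac{1}{16}\widetilde{\varphi}(a,0,0,0)$ by (\ref{3}). Substituting the value just computed gives $\|f(a)-D(a)\| \le \frac{1}{16}\cdot\frac{8\delta}{8-2^{r}}\|a\|^{r} = \frac{\delta}{2(8-2^{r})}\|a\|^{r}$, which is exactly (\ref{11}). There is no genuine obstacle here: the entire content is the convergence of the geometric series, for which the hypothesis $r<3$ is both used and, at the borderline power $r=3$, sharp. The one point deserving care is the continuity requirement of the theorem (that $t\mapsto f(ta)$ be continuous for each fixed $a$), which is needed to pass from $\mathbf{T}_{\frac{1}{n_0}}$-homogeneity to full $\mathbf{C}$-homogeneity of $D$; I would treat this as a standing assumption inherited from Theorem \ref{th1}, the remainder of the argument being an unadorned specialization.
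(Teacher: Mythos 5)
Your proposal is correct and is exactly the paper's argument: the paper proves the corollary by invoking Theorem \ref{th1} with $\varphi(a,b,c,d)=\delta(\|a\|^{r}+\|b\|^{r}+\|c\|^{r}+\|d\|^{r})$, and your geometric-series computation $\widetilde{\varphi}(a,0,0,0)=\frac{8\delta}{8-2^{r}}\|a\|^{r}$ correctly supplies the details (including the constant in (\ref{11})) that the paper leaves implicit.
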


\begin{proof}
It follows from Theorem \ref{th1} by taking
\begin{eqnarray*} \varphi(a,b,c,d)=\delta (\|a\|^r + \|b\|^r+ \|c\|^r +\|d\|^r).\end{eqnarray*}
\end{proof}

\begin{theorem}\label{th2}
Suppose that $f: A \longrightarrow X$ is a mapping with $f(0) = 0$
for which there exists a function $\varphi  :A^4\longrightarrow [0,\infty)$
satisfying \emph{(}\ref{1}\emph{)}, \emph{(}\ref{2}\emph{)} and
$$\widetilde{\varphi}(a,b,c,d):=\sum_{k=1}^{\infty}
8^{k}\varphi(2^{-k}a,2^{-k}b,2^{-k}c,2^{-k}d)<\infty,$$ for all
$a,b,c,d \in A.$ Also, if for each fixed $a\in A$ the mappings
$t\mapsto f(ta)$ from $\bf{R}$ to $A$ is continuous, then
there exists a unique cubic derivation $D: A \longrightarrow X$
satisfying
\begin{eqnarray}\label{20}
\|f(a)-D(a)\|\leq \frac{1}{16}\widetilde{\varphi}(a,0,0,0)
\end{eqnarray}
for all $a\in A$.
\end{theorem}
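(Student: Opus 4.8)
The plan is to run the argument of Theorem \ref{th1} in reverse, constructing $D$ from the ``downward'' sequence $\{8^n f(2^{-n}a)\}$ rather than from $\{8^{-n}f(2^na)\}$. First I would put $b=0$ and $\lambda=1$ in (\ref{1}) to obtain $\|f(2a)-8f(a)\|\le\frac12\varphi(a,0,0,0)$, and then replace $a$ by $a/2$ to get $\|f(a)-8f(a/2)\|\le\frac12\varphi(2^{-1}a,0,0,0)$. Writing $g_n:=8^nf(2^{-n}a)$, a telescoping estimate yields
\[
\|g_n-f(a)\|\le\frac{1}{16}\sum_{k=1}^{n}8^{k}\varphi(2^{-k}a,0,0,0),
\]
and more generally $\|g_n-g_m\|\le\frac{1}{16}\sum_{k=m+1}^{n}8^{k}\varphi(2^{-k}a,0,0,0)$ for $n>m$. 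The finiteness of $\widetilde{\varphi}$ forces $\{g_n\}$ to be Cauchy in the complete space $X$, so I may set $D(a):=\lim_{n\to\infty}8^nf(2^{-n}a)$; letting $n\to\infty$ in the displayed bound gives exactly (\ref{20}).

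Next I would check that $D$ is cubic homogeneous. Replacing $a,b$ by $2^{-n}a,2^{-n}b$ in (\ref{1}) and multiplying through by $8^n$, the right-hand side becomes $8^n\varphi(2^{-n}a,2^{-n}b,0,0)$, which is the $k=n$ summand of $\widetilde{\varphi}$ and hence a null sequence; passing to the limit shows that $D$ satisfies the cubic equation and that $D(\lambda a)=\lambda^3D(a)$ for $\lambda\in\mathbf{T}_{\frac{1}{n_0}}$. Exactly as in Theorem \ref{th1}, I would promote this first to all of $\mathbf{T}$ by writing $\lambda=\lambda_0^{n_0}$, then to $\lambda\in\mathbf{R}$ using the assumed continuity of $t\mapsto f(ta)$ (invoking \cite{Cz}), and finally to all nonzero $\lambda\in\mathbf{C}$ via the $\lambda/|\lambda|$ normalization. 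Uniqueness is identical to Theorem \ref{th1}: any second cubic derivation $D'$ meeting (\ref{20}) satisfies $\|D(a)-D'(a)\|=8^n\|D(2^{-n}a)-D'(2^{-n}a)\|$, which is squeezed to $0$ by a tail of $\widetilde{\varphi}$.

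The delicate step, and the one I expect to be the main obstacle, is the multiplicativity relation $D(cd)=D(c)\cdot d^3+c^3\cdot D(d)$. Substituting $c,d\mapsto 2^{-n}c,2^{-n}d$ in (\ref{2}) and multiplying by $8^{2n}$, the three principal terms behave well: $8^{2n}f(2^{-2n}cd)\to D(cd)$, while the factor $2^{-3n}$ from each cube combines with $8^{2n}$ to leave $8^nf(2^{-n}c)\cdot d^3\to D(c)\cdot d^3$ and $8^nc^3\cdot f(2^{-n}d)\to c^3\cdot D(d)$. The error term, however, is $8^{2n}\varphi(0,0,2^{-n}c,2^{-n}d)$, and here the mirror of Theorem \ref{th1} runs the wrong way: there one had the factor $8^{-2n}\le 8^{-n}$ and could dominate the error by the summand $8^{-n}\varphi(0,0,2^nc,2^nd)$ of $\widetilde{\varphi}$, whereas now $8^{2n}\ge 8^n$ so $8^{2n}\varphi(0,0,2^{-n}c,2^{-n}d)$ is \emph{not} automatically controlled by the convergence of $\sum_k 8^k\varphi(2^{-k}\cdots)$. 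Forcing this error to vanish is thus the crux: it must be read off from the growth hypothesis on $\varphi$ directly rather than inherited for free from the additive part of the proof. (For the power weight $\delta(\|c\|^r+\|d\|^r)$ one sees the phenomenon explicitly, since $8^{2n}\varphi(0,0,2^{-n}c,2^{-n}d)=\delta\,2^{n(6-r)}(\|c\|^r+\|d\|^r)$ tends to $0$ only when $r>6$, a strictly stronger restriction than the $r>3$ that already guarantees (\ref{20}).)
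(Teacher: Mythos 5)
Your construction of $D$ is exactly the paper's: put $b=0$, $\lambda=1$ in the cubic inequality, iterate downward to get $\|f(a)-8^nf(a/2^n)\|\le\frac{1}{16}\sum_{k=1}^{n}8^k\varphi(a/2^k,0,0,0)$, show $\{8^nf(a/2^n)\}$ is Cauchy, define $D$ as its limit, and let $n\to\infty$ to obtain (\ref{20}). The paper then dismisses everything else with ``similar to the proof of Theorem \ref{th1},'' and your homogeneity and uniqueness arguments match what that sentence is meant to cover.

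The obstacle you single out in the multiplicative identity is genuine, and it is not resolved in the paper either. After rescaling, the error term is $8^{2n}\varphi(0,0,2^{-n}c,2^{-n}d)=8^n\bigl(8^n\varphi(0,0,2^{-n}c,2^{-n}d)\bigr)$; the inner factor is the $n$-th term of the convergent series defining $\widetilde{\varphi}$ and so tends to $0$, but the outer factor $8^n$ can destroy this, and for $\varphi=\delta(\|c\|^r+\|d\|^r)$ with $3<r\le 6$ it does, since the error equals $\delta\,2^{n(6-r)}(\|c\|^r+\|d\|^r)$. In Theorem \ref{th1} the analogous step uses $\varphi(0,0,2^nc,2^nd)/8^{2n}\le\varphi(0,0,2^nc,2^nd)/8^n$, an inequality that points the right way; here it points the wrong way, so ``similar to Theorem \ref{th1}'' does not carry this step. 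Closing it requires an additional hypothesis, e.g.\ $\lim_{n\to\infty}8^{2n}\varphi(0,0,2^{-n}c,2^{-n}d)=0$ (for instance, convergence of $\sum_{k}64^{k}\varphi(2^{-k}a,2^{-k}b,2^{-k}c,2^{-k}d)$ suffices and still yields (\ref{20})), which in Corollary \ref{coco} amounts to $r>6$ rather than $r>3$. So your write-up is correct up to, and honest about, the one step that neither you nor the paper actually proves; as stated, the verification that $D$ satisfies $D(cd)=D(c)\cdot d^3+c^3\cdot D(d)$ is incomplete in both.
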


\begin{proof} Putting $b=0$ and $\lambda=1$ in (\ref{1}), we have
\begin{eqnarray}\label{21}
\|f(2a)-8f(a)\|\leq\frac{1}{2}\varphi(a,0,0,0)
\end{eqnarray}
for all $a\in A$. We replace $a$ by $\frac{a}{2}$ in (\ref{21}) to
obtain
\begin{eqnarray}\label{22}
\|f(a)-8f(\frac{a}{2})\|\leq\frac{1}{2}\varphi(\frac{a}{2},0,0,0)
\end{eqnarray}

Using triangular inequality and proceeding this way, we have
\begin{eqnarray}\label{23}\left\|f(a)-8^nf(\frac{a}{2^n})\right\|\leq\frac{1}{16}\sum_{k=1}^{n}
8^k\varphi(\frac{a}{2^k},0,0,0)\end{eqnarray}

 If we show that the
sequence $\left\{8^nf(\frac{a}{2^n})\right\}$ is Cauchy, then it
will be convergent by the completeness of $A$. For this, replace
$a$ by $\frac{a}{2^m}$ in (\ref{23}) and then multiply both side
by $8^{m}$, we get
\begin{eqnarray*}
\left\|8^mf(\frac{a}{2^m})-8^{m+n}f(\frac{a}{2^{m+n}})\right\|&\leq&\frac{1}{16}\sum_{k=1}^{n}
8^{k+m}\varphi(\frac{a}{2^{k+m}},0,0,0)\\
&=&\frac{1}{16}\sum_{k=m+1}^{m+n}
8^{k}\varphi(\frac{a}{2^{k}},0,0,0)
\end{eqnarray*}
 for all $a\in
A$, and $n>m\geq 0$. Thus the mentioned sequence is convergent  to
the map $D$, i.e.,
\begin{eqnarray*}
D(a)=\lim_{n\to\infty}8^nf(\frac{a}{2^n}).
\end{eqnarray*}

Now, similar to the proof of Theorem \ref{th1}, we can continue
the rest of the proof.
\end{proof}

\begin{corollary}\label{coco}Let $\delta, r$ be positive real numbers  with $r>3$, and
let $f: A \longrightarrow X$ be a mapping with $f(0) = 0$
satisfying \emph{(}\ref{9}\emph{)}, \emph{(}\ref{10}\emph{)}. Then there exists a unique
cubic derivation $D: A \longrightarrow X$ satisfying
\begin{eqnarray}\label{13}
\|f(a)-D(a)\|\leq \frac{\delta}{2(2^{r}-8)}\|a\|^r
\end{eqnarray}
for all $a\in A$.
\end{corollary}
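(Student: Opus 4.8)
The plan is to deduce this directly from Theorem~\ref{th2}, exactly as Corollary~\ref{coo} was deduced from Theorem~\ref{th1}, by choosing the control function
\[
\varphi(a,b,c,d)=\delta\bigl(\|a\|^r+\|b\|^r+\|c\|^r+\|d\|^r\bigr).
\]
First I would check that the hypotheses (\ref{1}) and (\ref{2}) of Theorem~\ref{th2} hold for this $\varphi$. Since $\varphi(a,b,0,0)=\delta(\|a\|^r+\|b\|^r)$ and $\varphi(0,0,c,d)=\delta(\|c\|^r+\|d\|^r)$, these reduce precisely to the assumed estimates (\ref{9}) and (\ref{10}); the continuity hypothesis on $t\mapsto f(ta)$ is inherited unchanged.

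Next I would verify the summability condition of Theorem~\ref{th2}, namely that $\widetilde{\varphi}(a,b,c,d)=\sum_{k=1}^{\infty}8^k\varphi(2^{-k}a,2^{-k}b,2^{-k}c,2^{-k}d)$ is finite. A generic summand contributes $8^k\,\delta\,2^{-kr}\|a\|^r=\delta\|a\|^r\,2^{(3-r)k}$, so the series is geometric with ratio $2^{3-r}$. This is exactly where the hypothesis $r>3$ enters: it forces $2^{3-r}<1$, guaranteeing convergence. (This contrasts with Corollary~\ref{coo}, where the expanding iteration $f(2^ka)/8^k$ yields ratio $2^{r-3}$ and hence requires $r<3$.)

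Finally I would evaluate the relevant value of $\widetilde{\varphi}$ and read off the constant. Setting $b=c=d=0$,
\[
\widetilde{\varphi}(a,0,0,0)=\delta\|a\|^r\sum_{k=1}^{\infty}2^{(3-r)k}=\delta\|a\|^r\frac{2^{3-r}}{1-2^{3-r}}=\frac{8\delta}{2^r-8}\|a\|^r,
\]
where the last step follows from multiplying numerator and denominator by $2^r$. Substituting into the bound (\ref{20}) of Theorem~\ref{th2} gives $\|f(a)-D(a)\|\le\frac{1}{16}\widetilde{\varphi}(a,0,0,0)=\frac{\delta}{2(2^r-8)}\|a\|^r$, which is precisely (\ref{13}); the existence and uniqueness of the cubic derivation $D$ come for free from Theorem~\ref{th2}. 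I do not expect any genuine obstacle here: the only points demanding attention are the bookkeeping of the geometric sum and the recognition that the sign of $r-3$ is exactly what selects the contracting construction of Theorem~\ref{th2} over the expanding one of Theorem~\ref{th1}.
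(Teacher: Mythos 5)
Your proposal is correct and follows exactly the paper's route: the paper's proof of Corollary~\ref{coco} is the one-line observation that it follows from Theorem~\ref{th2} with $\varphi(a,b,c,d)=\delta(\|a\|^r+\|b\|^r+\|c\|^r+\|d\|^r)$, and your verification of the summability condition (where $r>3$ gives ratio $2^{3-r}<1$) and your evaluation of the geometric sum yielding the constant $\frac{\delta}{2(2^r-8)}$ are the details the paper leaves implicit. No discrepancies.
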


\begin{proof} The result follows from Theorem  \ref{th2} by putting
\begin{eqnarray*} \varphi(a,b,c,d)=\delta (\|a\|^r + \|b\|^r+ \|c\|^r +\|d\|^r).\end{eqnarray*}
\end{proof}

%----------------------------------------------------------------------------------------------------------------------------------
%----------------------------------------------------------------------------------------------------------------------------------
%----------------------------------------------------------------------------------------------------------------------------------

\section{A fixed point approach}
In this section, we prove the
stability and the superstability for cubic derivations on Banach algebras by using a fixed point theorem. First, we bring the following fixed point theorem which is proved
in \cite{dia}. This theorem plays a fundamental role to achieve
our purpose in this section (an extension of the result was given
in \cite{tur}).

\begin{theorem}\label{t} \emph{(}The fixed point alternative\emph{)} Let $(\Delta,d)$ be a complete generalized metric space
 and $\mathcal J:\Delta\longrightarrow \Delta$ be a mapping
 with a Lipschitz constant $L<1$. Then, for each element $\alpha\in \Delta$, either $d(\mathcal J^n \alpha, \mathcal J^{n+1}
\alpha)=\infty~$ for all $n\geq0,$ or there exists a natural
number $n_{0}$ such that:
\begin{enumerate}
\item[\emph{(i)}] {$d(\mathcal J^n \alpha, \mathcal J^{n+1} \alpha)<\infty ~$for all $n \geq n_{0}$;}
\item[\emph{(ii)}] {the sequence $\{\mathcal J^n \alpha\}$ is convergent to a fixed
point $\beta^*$ of $\mathcal J$;}
\item[\emph{(iii)}] { $\beta^*$ is the unique fixed point of $\mathcal J$ in the set
$~\Delta_1=\{\beta\in \Delta:d(\mathcal T^{n_{0}} \alpha,
\beta)<\infty\}$;}
\item[\emph{(iv)}] {$d(\beta,\beta^*)\leq\frac{1}{1-L}d(\beta, \mathcal J\beta)$ for
all $~\beta\in\Delta_1$.}
\end{enumerate}
\end{theorem}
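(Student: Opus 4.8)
The plan is to exploit the contractive behaviour of $\mathcal{J}$ together with the completeness of $\Delta$, while treating the possibility of infinite distances with care throughout. First I would record the basic estimate: since $\mathcal{J}$ has Lipschitz constant $L$, one has $d(\mathcal{J}^{n+1}\alpha,\mathcal{J}^{n+2}\alpha)\leq L\,d(\mathcal{J}^n\alpha,\mathcal{J}^{n+1}\alpha)$ for every $n\geq 0$. The key consequence is that finiteness propagates forward: if $d(\mathcal{J}^n\alpha,\mathcal{J}^{n+1}\alpha)<\infty$ for some index, then it remains finite for all larger indices. This immediately yields the dichotomy in the statement --- either $d(\mathcal{J}^n\alpha,\mathcal{J}^{n+1}\alpha)=\infty$ for every $n$, or there is a least integer $n_0$ beyond which it is finite, which is (i).

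Assuming the second alternative, I would next prove (ii) by showing that $\{\mathcal{J}^n\alpha\}$ is Cauchy. For $n\geq n_0$ and $p\geq 1$, the triangle inequality combined with the geometric decay $d(\mathcal{J}^{n_0+j}\alpha,\mathcal{J}^{n_0+j+1}\alpha)\leq L^{j}\,d(\mathcal{J}^{n_0}\alpha,\mathcal{J}^{n_0+1}\alpha)$ gives
\[
d(\mathcal{J}^n\alpha,\mathcal{J}^{n+p}\alpha)\leq \frac{L^{n-n_0}}{1-L}\,d(\mathcal{J}^{n_0}\alpha,\mathcal{J}^{n_0+1}\alpha),
\]
which tends to $0$. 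Completeness then produces a limit $\beta^*$, and Lipschitz continuity of $\mathcal{J}$ along this convergent sequence forces $\mathcal{J}\beta^*=\beta^*$, since $\mathcal{J}^{n+1}\alpha$ converges both to $\beta^*$ and to $\mathcal{J}\beta^*$.

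For the uniqueness claim (iii), I would first note that $\beta^*\in\Delta_1$, because $d(\mathcal{J}^{n_0}\alpha,\beta^*)$ is finite (being the limit of the finite partial distances along the Cauchy sequence). If $\gamma$ is any fixed point in $\Delta_1$, then $d(\beta^*,\gamma)\leq d(\beta^*,\mathcal{J}^{n_0}\alpha)+d(\mathcal{J}^{n_0}\alpha,\gamma)<\infty$, and applying $\mathcal{J}$ yields $d(\beta^*,\gamma)=d(\mathcal{J}\beta^*,\mathcal{J}\gamma)\leq L\,d(\beta^*,\gamma)$; since this distance is finite and $L<1$, it must vanish. Finally, for (iv), given $\beta\in\Delta_1$ I would check $d(\beta,\beta^*)<\infty$ (again by the triangle inequality through $\mathcal{J}^{n_0}\alpha$) and then estimate $d(\beta,\beta^*)\leq d(\beta,\mathcal{J}\beta)+d(\mathcal{J}\beta,\mathcal{J}\beta^*)\leq d(\beta,\mathcal{J}\beta)+L\,d(\beta,\beta^*)$; rearranging gives the desired bound.

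The main obstacle, and the only genuine subtlety, is the bookkeeping forced by the generalized metric: because $d$ may equal $+\infty$, every cancellation step --- in particular the rearrangements in (iii) and (iv) --- is legitimate only after one has independently verified that the distance being manipulated is finite. I expect that establishing forward propagation of finiteness at the very start, and confirming $\beta^*\in\Delta_1$, are precisely the ingredients that render all the later subtractions valid.
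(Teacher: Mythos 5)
The paper does not prove this theorem at all --- it quotes it as a known result of Diaz and Margolis (reference \cite{dia}) --- so there is no in-paper proof to compare against. Your argument is the standard proof of that result and is correct: the forward propagation of finiteness under the contraction, the geometric-series Cauchy estimate, and the verification that $\beta^*\in\Delta_1$ and that the relevant distances are finite before any rearrangement are precisely the points requiring care in a generalized metric space, and you handle each of them.
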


\begin{theorem}\label{th4}
Let $f :A \longrightarrow X$  be a continuous mapping with $f(0)=0$
and let $\phi  :A^2\longrightarrow [0,\infty)$ be a continuous function such
that
\begin{eqnarray*}
\|f(2\lambda a+\lambda b)+f(2\lambda a-\lambda b)-2\lambda^3
f(a+b)-2\lambda^3
f(a-b)-12\lambda^3f(a)\|\end{eqnarray*}\begin{eqnarray}\label{a1}\leq\phi(a,b)
\end{eqnarray}
\begin{eqnarray}\label{a2}
\|f(ab)-f(a)\cdot b^3-a^3\cdot f(b)\|\leq\phi(a,b)
\end{eqnarray}
for all $\lambda\in \bf T_{\frac{1}{n_0}}$ and  all $a,b \in A.$
If there exists a constant $k\in(0,1)$, such that
\begin{eqnarray}\label{a3}
\phi(2a,2b)\leq 8k \phi(a,b)
\end{eqnarray}
for all $a,b \in A$, then there exists a unique cubic derivation
$D:A \longrightarrow X$ satisfying
\begin{eqnarray}\label{a4}
\|f(a)-D(a)\|\leq \frac{1}{16(1-k)}\phi(a,0)
\end{eqnarray}
for all $a\in A$.
\end{theorem}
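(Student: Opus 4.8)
The plan is to apply the fixed point alternative (Theorem~\ref{t}) to a suitably chosen complete generalized metric space of functions, following the now-standard C\u{a}dariu--Radu scheme. First I would fix the underlying set
\begin{eqnarray*}
\Delta:=\{g:A\longrightarrow X \ ;\ g(0)=0\},
\end{eqnarray*}
and equip it with the generalized metric
\begin{eqnarray*}
d(g,h):=\inf\{\mu\in[0,\infty] \ ;\ \|g(a)-h(a)\|\leq\mu\,\phi(a,0)\ \text{for all}\ a\in A\},
\end{eqnarray*}
with the usual convention that the infimum of the empty set is $+\infty$. A routine verification shows $(\Delta,d)$ is a complete generalized metric space. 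On this space I would define the linear operator $\mathcal J$ by $(\mathcal Jg)(a):=\frac{1}{8}g(2a)$, which is the natural contraction suggested by the cubic scaling.

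The key computational step is to extract from the hypotheses the starting inequality and the contraction constant. Setting $b=0$ and $\lambda=1$ in (\ref{a1}) yields, exactly as in (\ref{4}) of Theorem~\ref{th1}, the estimate $\|\frac{1}{8}f(2a)-f(a)\|\leq\frac{1}{16}\phi(a,0)$ for all $a\in A$; in the language of the metric this reads $d(f,\mathcal Jf)\leq\frac{1}{16}$, so in particular $d(f,\mathcal Jf)<\infty$ and the first alternative of Theorem~\ref{t} is ruled out. Next I would use the condition (\ref{a3}), $\phi(2a,2b)\leq 8k\,\phi(a,b)$, to show that $\mathcal J$ is a contraction with Lipschitz constant $L=k<1$: if $\|g(a)-h(a)\|\leq\mu\,\phi(a,0)$ then
\begin{eqnarray*}
\|(\mathcal Jg)(a)-(\mathcal Jh)(a)\|=\tfrac{1}{8}\|g(2a)-h(2a)\|\leq\tfrac{\mu}{8}\,\phi(2a,0)\leq k\mu\,\phi(a,0),
\end{eqnarray*}
whence $d(\mathcal Jg,\mathcal Jh)\leq k\,d(g,h)$.

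The fixed point alternative now supplies a fixed point $D$ of $\mathcal J$, obtained as the limit $D(a)=\lim_{n\to\infty}\frac{f(2^na)}{8^n}$ (part~(ii)); being a fixed point, $D$ satisfies $D(2a)=8D(a)$. The error bound (\ref{a4}) follows from part~(iv) applied to $\beta=f$, since $\|f(a)-D(a)\|\leq d(f,D)\,\phi(a,0)\leq\frac{1}{1-k}d(f,\mathcal Jf)\,\phi(a,0)\leq\frac{1}{16(1-k)}\phi(a,0)$. Uniqueness of $D$ among cubic derivations with this bound comes from part~(iii) together with the fact that any such competitor lies in $\Delta_1$.

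The remaining work, and the part I expect to be the main obstacle, is to verify that the fixed point $D$ is genuinely a cubic \emph{derivation} rather than merely a solution of the halving relation. For the cubic identity and the homogeneity $D(\lambda a)=\lambda^3 D(a)$, I would replace $a,b$ by $2^na,2^nb$ in (\ref{a1}), divide by $8^n$, and let $n\to\infty$; the right-hand side tends to $0$ because (\ref{a3}) forces $\phi(2^na,2^nb)/8^n\leq k^n\phi(a,b)\to 0$, giving the cubic equation and, with $b=0$, the relation $D(\lambda a)=\lambda^3D(a)$ for $\lambda\in\mathbf T_{1/n_0}$. The passage from $\mathbf T_{1/n_0}$ to all of $\mathbf T$, then to $\mathbf R$ via the continuity hypothesis (invoking \cite{Cz} as in Theorem~\ref{th1}), and finally to all of $\mathbf C\setminus\{0\}$ by the polar argument $D(\lambda a)=\frac{\lambda^3}{|\lambda|^3}|\lambda|^3D(a)$, proceeds exactly as before. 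The multiplicative property $D(ab)=D(a)\cdot b^3+a^3\cdot D(b)$ is handled analogously: replacing $a,b$ by $2^na,2^nb$ in (\ref{a2}), dividing by $8^{2n}$ and noting $\phi(2^na,2^nb)/8^{2n}\leq k^n\phi(a,b)/8^n\to 0$, yields the derivation identity in the limit. Thus the crux is really the bookkeeping that shows each approximate identity survives the limit, which the geometric decay built into (\ref{a3}) guarantees.
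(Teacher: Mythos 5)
Your proposal is correct and follows essentially the same route as the paper: the same space $\Delta$ with the same generalized metric, the same operator $\mathcal J g(a)=\frac{1}{8}g(2a)$ with Lipschitz constant $k$ extracted from (\ref{a3}), the same starting estimate $d(f,\mathcal Jf)\leq\frac{1}{16}$, and the same limiting arguments (using $\phi(2^na,2^nb)/8^n\to 0$) to upgrade the fixed point to a cubic homogeneous derivation. No substantive differences to report.
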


\begin{proof}To achieve our goal, we make the conditions of Theorem \ref{t}. We
consider the set
$$\Delta=\{g:A\longrightarrow X|\,\, g(0)=0\}$$
and define the mapping $d$ on $\Delta\times \Delta$ as follows:
$$d(g,h):=inf\{c\in (0,\infty):\|g(a)-h(a)\|\leq c\phi(a,0),\quad (\forall a\in A)\},$$
if there exist such constant $c$, and $d(g,h)=\infty$, otherwise.
Similar to the proof of \cite[Theorem 2.2]{bag}, we can show that
$d$ is a generalized metric on $\Delta$ and the metric space
$(\Delta,d)$ is complete. Now, we define the mapping $\mathcal
J:\Delta\longrightarrow \Delta$ by
\begin{eqnarray}\label{a5}
\mathcal Jh(a)=\frac{1}{8}h(2a),\quad(a\in A).\end{eqnarray}

If $g,h\in \Delta$ such that $d(g,h)<c$, by definition of $d$ and
$\mathcal J$, we have
$$
\left\|\frac{1}{8}g(2a)-\frac{1}{8}h(2a)\right\|\leq\frac{1}{8}c\phi(2a,0)
$$
for all $a\in A$. Applying (\ref{a3}), we get
$$
\left\|\frac{1}{8}g(2a)-\frac{1}{8}h(2a)\right\|\leq ck\phi(a,0)
$$
for all $a\in A$. The above inequality shows that $d(\mathcal
Jg,\mathcal Jh)\leq k d(g,h)$ for all $g,h\in \Delta$. Hence,
$\mathcal J$ is a strictly contractive mapping on $\Delta$ with a
Lipschitz constant $k$. Here, we prove that $d(\mathcal Jf,f)<\infty$. Putting $b=0$ and
$\lambda=1$ in (\ref{a1}), we obtain
$$\|2f(2a)-16f(a)\|\leq\phi(a,0)$$
for all $a\in A$. Hence
\begin{eqnarray}\label{a6}
\|\frac{1}{8}f(2a)-f(a)\|\leq\frac{1}{16}\phi(a,0)
\end{eqnarray}
for all $a\in A$. We conclude from (\ref{a6}) that $d(\mathcal J
f,f)\leq\frac{1}{16}$. It follows from Theorem \ref{t} that
$d(\mathcal J^n g, \mathcal J^{n+1} g)<\infty ~$ for all $n \geq
0$, and thus in this Theorem we have $n_0=0$. Therefore the parts
(iii) and (iv) of Theorem \ref{t} hold on the whole $\Delta$.
Hence there exists a unique mapping $D: A\longrightarrow X$ such that
$D$ is a fixed point of $\mathcal J$ and that $\mathcal J
^nf\rightarrow D$ as $n \rightarrow \infty$. Thus
\begin{eqnarray}\label{a7}
\lim_{n\to\infty}\frac{f(2^na)}{8^n}=D(a)
\end{eqnarray}
for all $a\in A$, and so
$$d(f,D)\leq\frac{1}{1-k}d(\mathcal J f,f)\leq\frac{1}{16(1-k)}.$$

The above equalities show that (\ref{a4}) is true for all $a\in
A$. Now, it follows from (\ref{a3}) that
\begin{eqnarray}\label{a8}
\lim_{n\to\infty}\frac{\phi(2^na,2^nb)}{8^n}=0.
\end{eqnarray}

Replacing $a$ and $b$ by $2^na$ and $2^nb$ respectively in
(\ref{a1}), we get
$$\|\frac{f(2^{n}(2\lambda a+b))}{8^{n}}-\frac{f(2^{n}(2\lambda
a-b))}{8^{n}}-2\lambda^{3}\frac{f(2^{n}(a+b))}{8^{n}}$$ $$
-2\lambda^{3}\frac{f(2^{n}(a-b))}{8^{n}}
-12\lambda^{3}\frac{f(2^{n}a)}{8^{n}}\|\leq\frac{\phi(2^na,2^nb)}{8^n}.$$

Taking the limit as $n\longrightarrow \infty$, we obtain
\begin{eqnarray}\label{a8} D(2\lambda a+\lambda b)+D(2\lambda a-\lambda b)=2\lambda^3 D(a+b)+2\lambda^3
D(a-b)+12\lambda^3D(a)\end{eqnarray}
 for all $a,b\in A$ and all
$\lambda\in \bf T_{\frac{1}{n_0}}$. By (\ref{a8}), $D$ is a cubic
mapping when $\lambda=1$. Letting $b=0$ in (\ref{a8}), we get
$D(\lambda a)=\lambda^3 D(a)$ for all $a\in A$ and $\lambda\in
\bf T_{\frac{1}{n_0}}$. Similar to the proof of Theorem
\ref{th1}, we have $D(\lambda a)=\lambda^3 D(a)$ for all $a\in A$
and $\lambda\in \bf T$. Since $D$ is a cubic mapping,
$D(ra)=r^3D(a)$ for any rational number $r$. It follows from the
continuity of $f$ and $\phi$ that for each $\lambda\in
\bf{R}$, $D(\lambda a)=\lambda^3D(a)$.  The proof of Theorem
\ref{th1} indicates that $D(\lambda a)=\lambda^3D(a),$ for all
$a\in A$ and $\lambda\in \bf C \,\, (\lambda\neq 0)$. Therefore,
$D$ is a cubic homogeneous map. If we replace $a,b$ by $2^na,
2^nb$ respectively in (\ref{a2}), we have
$$\|\frac{f(2^{2n}ab)}{8^{2n}}-\frac{f(2^{n}a)}{8^{n}}\cdot b^3-a^3\cdot\frac{f(2^{n}b)}{8^{n}}\|\leq\frac{\phi(2^na,2^nb)}{8^{2n}}\leq\frac{\phi(2^na,2^nb)}{8^n}.$$
for all $a,b\in A$. Taking the limit as $n$ tend to infinity, we
get $D(ab)=D(a)\cdot b^3+a^3\cdot D(b)$, for all $a,b\in A$.
Therefore $D$ is a unique cubic derivation.
\end{proof}
In the following result, we get again Corollary \ref{coo} which is a direct consequence of the above Theorem.

\begin{corollary}\label{co1}
 Let $p, \delta$ be the nonnegative real numbers with $p<3$ and let $f :A \Longrightarrow X$  be
a mapping with $f(0)=0$ such that
\begin{eqnarray*}
\|f(2\lambda a+\lambda b)+f(2\lambda a-\lambda b)-2\lambda^3
f(a+b)-2\lambda^3 f(a-b)-12\lambda^3f(a)\|\end{eqnarray*}
\begin{eqnarray}\leq\delta
(\|a\|^p + \|b\|^p)
\end{eqnarray}
\begin{eqnarray}
\|f(ab)-f(a)\cdot b^3-a^3\cdot f(b)\|\leq\delta( \|a\|^p +\|b\|^p)
\end{eqnarray}
for all $\lambda\in \bf T_{\frac{1}{n_0}}$ and  all $a,b\in A.$
Then there exists a unique cubic derivation $D:A \longrightarrow X$
satisfying
\begin{eqnarray*}
\|f(a)-D(a)\|\leq \frac{\delta}{2(8-2^{p})}\|a\|^p
\end{eqnarray*}
for all $a\in A$.
\end{corollary}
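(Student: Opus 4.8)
The plan is to read this off from Theorem~\ref{th4} by specializing the control function to
\[
\phi(a,b) := \delta\bigl(\|a\|^p + \|b\|^p\bigr) \qquad (a,b\in A).
\]
First I would confirm that this $\phi$ satisfies the standing hypotheses of Theorem~\ref{th4}: it takes values in $[0,\infty)$, it is continuous (being assembled from the norm and a fixed power), and with this substitution the two estimates assumed in the corollary coincide verbatim with the two approximation inequalities required by the theorem. The one nontrivial point is the contraction condition~(\ref{a3}). Since $\|2a\|^p = 2^p\|a\|^p$, one gets the exact identity $\phi(2a,2b) = 2^p\,\phi(a,b)$, so~(\ref{a3}) holds precisely when $2^p \le 8k$. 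As $p<3$, the value $k := 2^{p-3}$ lies in $(0,1)$ and is admissible; this is the only place where the hypothesis $p<3$ is used in an essential way, and if $p\ge 3$ no contraction constant in $(0,1)$ exists.

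With the hypotheses verified, Theorem~\ref{th4} hands over a unique cubic derivation $D:A\longrightarrow X$ obeying
\[
\|f(a)-D(a)\| \le \frac{1}{16(1-k)}\,\phi(a,0) \qquad (a\in A),
\]
and it remains only to simplify the constant. Here $\phi(a,0)=\delta\|a\|^p$, while $k=2^{p-3}=2^p/8$ gives $16(1-k)=16-2\cdot 2^p = 2(8-2^p)$; substituting yields exactly the asserted bound $\frac{\delta}{2(8-2^p)}\|a\|^p$. To recover this sharp constant one must take the smallest admissible value $k=2^{p-3}$: any larger $k<1$ would still deliver a cubic derivation, but with a weaker estimate.

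Because the whole argument is a substitution into a theorem already proved, I do not anticipate a genuine obstacle; the work is bookkeeping rather than conceptual. The one point worth flagging is that Theorem~\ref{th4} requires $f$ to be continuous, whereas the corollary assumes only $f(0)=0$. To apply the theorem literally I would either carry continuity of $f$ as a tacit hypothesis, or else supply the scalar-homogeneity step by hand: the cubic identity already forces $D(ra)=r^3D(a)$ for every rational $r$, and combining the power estimate above with the $\mathbf{T}_{1/n_0}$-homogeneity of $D$ extends this to all real, and then all nonzero complex, scalars exactly as in the proof of Theorem~\ref{th1}. Once $\mathbf{C}$-homogeneity of $D$ is secured, the relation $D(ab)=D(a)\cdot b^3 + a^3\cdot D(b)$ and the uniqueness of $D$ are inherited directly from Theorem~\ref{th4}.
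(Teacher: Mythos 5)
Your proposal is correct and follows the same route as the paper, which simply substitutes $\phi(a,b)=\delta(\|a\|^p+\|b\|^p)$ into Theorem~\ref{th4}; you merely make explicit the bookkeeping (the choice $k=2^{p-3}$ and the identity $16(1-k)=2(8-2^p)$) that the paper leaves to the reader. Your observation that the corollary omits the continuity hypothesis required by Theorem~\ref{th4} is a fair catch, and your suggested repair is sound, though the paper itself does not address it.
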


\begin{proof} If we put $\phi(a,b)=\delta (\|a\|^p + \|b\|^p)$ in Theorem \ref{th4}, we obtain the desired result.
\end{proof}

In the next result, we show that under which conditions cubic derivations are superstable.

\begin{corollary}\label{cor3}
Let $p,q, \delta$ be non-negative real numbers with $0<p+q<3$ and
let $f :A \longrightarrow X$ be a mapping with $f(0)=0$ such that

\begin{eqnarray*}
\|f(2\lambda a+\lambda b)+f(2\lambda a-\lambda b)-2\lambda^3
f(a+b)-2\lambda^3 f(a-b)-12\lambda^3f(a)\|\end{eqnarray*}
\begin{eqnarray}\label{a21}\leq\delta(\|a\|^{p}\|b\|^{q})
\end{eqnarray}
\begin{eqnarray}\label{a22}
\|f(ab)-f(a)\cdot b^3-a^3\cdot
f(b)\|\leq\delta(\|a\|^{p}\|b\|^{q})
\end{eqnarray}
for all $\lambda\in \bf T_{\frac{1}{n_0}}$ and  all $a,b \in A$.
Then $f$ is a cubic derivation on $A$.
\end{corollary}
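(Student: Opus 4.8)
The plan is to read this corollary off Theorem \ref{th4} by choosing the control function $\phi(a,b):=\delta\|a\|^{p}\|b\|^{q}$. With this choice the standing hypotheses (\ref{a21}) and (\ref{a22}) become precisely the two inequalities demanded by Theorem \ref{th4}, and $\phi$ is continuous, so the only hypothesis of that theorem still to be verified before invoking it is the contraction estimate.

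First I would check the contraction condition. A one-line computation gives
$$\phi(2a,2b)=\delta\,\|2a\|^{p}\|2b\|^{q}=2^{p+q}\,\delta\|a\|^{p}\|b\|^{q}=2^{p+q}\,\phi(a,b).$$
Because $0<p+q<3$ we have $p+q-3<0$, so $k:=2^{\,p+q-3}$ lies in $(0,1)$ and $\phi(2a,2b)=8k\,\phi(a,b)$; thus the required estimate holds with this $k$. Theorem \ref{th4} then yields a unique cubic derivation $D:A\longrightarrow X$ with $\|f(a)-D(a)\|\leq\frac{1}{16(1-k)}\phi(a,0)$ for every $a\in A$, which is the bound (\ref{a4}) specialized to the present $\phi$.

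The decisive point, the one that turns stability into superstability, is to evaluate this bound at $b=0$. Since the majorant factors as a product of the two norms, one has $\phi(a,0)=\delta\|a\|^{p}\|0\|^{q}=0$ for all $a\in A$. Hence the estimate degenerates to $\|f(a)-D(a)\|\leq 0$, forcing $f=D$; as $D$ is a cubic derivation, so is $f$, which is the assertion.

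I do not expect a genuine obstacle here: the entire content is the observation that a multiplicative majorant vanishes as soon as one slot is $0$, which is exactly why the additive majorant of Corollary \ref{co1} gives only stability whereas the product majorant gives superstability. The two small things I would take care to confirm are that the constant $k=2^{\,p+q-3}$ is admissible (guaranteed by $p+q<3$) and that the collapse $\phi(a,0)=0$ really holds, i.e. that $\|0\|^{q}=0$; the latter uses $q>0$, so this is the precise place where the product structure of the bound does the work.
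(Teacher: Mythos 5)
Your proposal is correct and matches the paper's argument in all essentials: both invoke Theorem \ref{th4} with $\phi(a,b)=\delta\|a\|^{p}\|b\|^{q}$, verify the contraction constant $k=2^{p+q-3}<1$, and exploit that this product majorant vanishes when one slot is $0$. The only cosmetic difference is that the paper first extracts the exact identity $f(2a)=8f(a)$ from the hypothesis at $b=0$ (whose right-hand side is $0$) and concludes $f=D$ via the limit formula $D(a)=\lim_{n}f(2^{n}a)/8^{n}$, whereas you read $f=D$ directly off the error bound $\frac{1}{16(1-k)}\phi(a,0)=0$; both routes rest on the same observation, and both (like the paper, which only assumes $p,q\geq 0$ with $p+q>0$) tacitly need $q>0$ so that $\|0\|^{q}=0$ --- a point you rightly flag.
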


\begin{proof}Putting $a=b=0$ in (\ref{a21}), we get $f(0)=0$. Now, if we put
$b=0$, $\lambda=1$ in (\ref{a21}), then we have $f(2a)=8f(a)$ for
all $a\in A$. It is easy to see by induction that
$f(2^na)=8^{n}f(a)$, and so $f(a)=\frac{f(2^na)}{8^{n}}$ for all
$a\in A$ and $n\in\bf{N}$. It follows from Theorem \ref{th4}
that $f$ is a cubic homogeneous mapping. Letting
$\phi(a,b)=\delta(\|a\|^{p}\|b\|^{q})$ in Theorem \ref{th4}, we
can obtain the desired result.\end{proof}

Note that if a mapping $f:A \longrightarrow X$ satisfies the
inequalities (\ref{a21}) and (\ref{a22}), where $p,q$ and $\delta$
are non-negative real numbers such that $p+q>3$ and $p$ is greater
than $0$, then it is obvious that $f$ is a cubic derivation on
$A$ by putting $\phi(a,b)=\delta \left(\|a\|^{p}\|b\|^{q}\right)$
in Theorem \ref{th4}.

\begin{acknowledgements}
The author sincerely thank the anonymous reviewer for a careful reading,
constructive comments and fruitful suggestions to improve the quality
of the paper and suggesting a related reference.
\end{acknowledgements}

% BibTeX users please use one of
%\bibliographystyle{spbasic}      % basic style, author-year citations
%\bibliographystyle{spmpsci}      % mathematics and physical sciences
%\bibliographystyle{spphys}       % APS-like style for physics
%\bibliography{}   % name your BibTeX data base

\begin{thebibliography}{}

\bibitem{bo1} A. Bodaghi, I. A. Alias and M. H. Ghahramani,  Approximately cubic functional
equations and cubic multipliers, J. Inequal. Appl.,  2011,
 53 (2011).

\bibitem{bag} A. Bodaghi, I. A. Alias and M. H. Ghahramani,  Ulam stability of a quartic functional equation, Abstr. Appl. Anal., Article ID 232630, 9 pages, doi:10.1155/2012/232630  (2012).

\bibitem {cara} L. C\u{a}dariu and V. Radu,  Fixed points and the stability of quadratic functional equations,
An. Univ. Timi\c{s}oara, Ser. Mat. Inform.,  41, 25--48  (2003).

\bibitem {cr2} L. C\u{a}dariu and V. Radu,  On the stability of the Cauchy
functional equation: A fixed point approach,
  Grazer Math. Ber.,  346, 43--52  (2004).

 \bibitem {Cz} S. Czerwik, On the stability of the quadratic mapping in
 normed spaces, Abh. Math. Sem. Univ. Hamburg.,  62,  59--64 (1992).

\bibitem {dia} J. B. Diaz and B. Margolis,  A fixed point theorem of the alternative for contractions on a generalized complete metric
space,  Bull. Amer. Math. Soc., 74,
305--309  (1968).

\bibitem {ebp}M. Eshaghi Gordji, A. Bodaghi, C. Park,  A fixed point approach to the stability of double Jordan
centralizers and Jordan multipliers on Banach algebras, U.P.B.
Sci. Bull., Series A.,  73, Iss. 2, 65--73  (2011).

 \bibitem {es} M. Eshaghi Gordji, S. kaboli Gharetapeh, M. B. Savadkouhi, M. Aghaei and T. Karimi, 
 On cubic derivations, Int. Journal of Math. Analysis,  4, No. 51,
 2501--2514  (2010).

\bibitem{h} D. H. Hyers,  On the stability of the linear
 functional equation, Proc. Nat. Acad. Sci. USA.,  27, 222--224  (1941).

\bibitem {lee} K. Y. Lee and J. R. Lee,  Cubic double centralizers
and cubic multipliers, Korean J. Math.  17, No. 4,
437--449 (2009).

\bibitem {n} A. Najati,  Hyers-Ulam-Rassias stability of a cubic functional equation, Bull. Korean Math. Soc.,  44, No. 4,  825--840  (2007).

\bibitem {jp} S. Y. Jang and C. Park,  Approximate $*$-derivations and approximate quadratic $*$-derivations on $C^*$-algebras,
J. Inequal. Appl., 2011,  55 (2011).

\bibitem {p7} C. Park,  Fixed points and Hyers-Ulam-Rassias stability of Cauchy-Jensen
 functional equations in Banach algebras,  Fixed Point Theory and Applications,  2007, Art. ID 50175 (2007).

 \bibitem {ph} C. G. Park and J. Hou,  Homomorphisms between $C^*$-algebras associated with the Trif functional equation and linear
 derivations on $C^*$-algebras,  J. Korean Math. Soc.,   41, No. 3, 461--477  (2004).
 
 \bibitem {ra1} J. M. Rassias,  On approximately of approximately linear mappings by linear mappings, J. Funct. Anal.,   46, 126--130  (1982).


\bibitem {ra} Th. M. Rassias,  On the stability of the linear
 mapping in Banach spaces, Proc. Amer. Math. Soc.,  72, 297--300  (1978).

\bibitem {ravi} K. Ravi, J. M. Rassias and P. Narasimman,  Stability of a cubic fuctional equation in fuzzy normed space,
J. Appl. Anal. Comput.,   1, No. 3, 411--425  (2011).


 \bibitem {tur} M. Turinici,  Sequentially iterative processes and applications to Volterra functional equations, Annales Univ. Mariae-Curie Sklodowska
(Sect A).,  32,  127-134 (1978).


\bibitem {ul} S. M. Ulam,  Problems in Modern Mathematics, Chapter
 VI, Science Ed., Wiley, New York (1940).

\end{thebibliography}

% Non-BibTeX users please use

\end{document}